\nonstopmode \numberwithin{equation}{section}
\newtheorem{thm}{Theorem}[section]
\newtheorem{cor}{Corollary}[section]
\newtheorem{lem}{Lemma}[section]
\newtheorem{prop}[equation]{Proposition}
\newtheorem{conj}{Conjecture}
\theoremstyle{definition}
\newtheorem{prob}{Problem}[section]
\newtheorem{rem}{Remark}[section]
\newcounter {own}
\def\theown {\thesection       .\arabic{own}}
\newenvironment{pf}[1][]{%
 \vskip 3mm
 \noindent
 \ifthenelse{\equal{#1}{}}%
  {{\slshape Proof. }}%
  {{\slshape #1.} }%
 }%
{\qed\bigskip}
\newcounter{alphabet}
\newcounter{tmp}
\def\be{\begin{equation}}
\def\ee{\end{equation}}
\newcommand{\bee}{\begin{enumerate}}
\newcommand{\eee}{\end{enumerate}}
\newcommand{\blem}{\begin{lem}}
\newcommand{\elem}{\end{lem}}
\newcommand{\bthm}{\begin{thm}}
\newcommand{\ethm}{\end{thm}}
\newcommand{\bcor}{\begin{cor}}
\newcommand{\ecor}{\end{cor}}
\newcommand{\beg}{\begin{examp}}
\newcommand{\eeg}{\end{examp}}
\newcommand{\begs}{\begin{examples}}
\newcommand{\eegs}{\end{examples}}
\newcommand{\bdefe}{\begin{defin}}
\newcommand{\edefe}{\end{defin}}
\newcommand{\bprob}{\begin{prob}}
\newcommand{\eprob}{\end{prob}}
\newcommand{\bei}{\begin{itemize}}
\newcommand{\eei}{\end{itemize}}
\newcommand{\bcon}{\begin{conj}}
\newcommand{\econ}{\end{conj}}
\newcommand{\bcons}{\begin{conjs}}
\newcommand{\econs}{\end{conjs}}
\newcommand{\bprop}{\begin{prop}}
\newcommand{\eprop}{\end{prop}}
\newcommand{\br}{\begin{rem}}
\newcommand{\er}{\end{rem}}
\newcommand{\brs}{\begin{rems}}
\newcommand{\ers}{\end{rems}}
\newcommand{\bo}{\begin{obser}}
\newcommand{\eo}{\end{obser}}
\newcommand{\bos}{\begin{obsers}}
\newcommand{\eos}{\end{obsers}}
\newcommand{\bpf}{\begin{pf}}
\newcommand{\epf}{\end{pf}}
\newcommand{\ba}{\begin{array}}
\newcommand{\ea}{\end{array}}
\newcommand{\beq}{\begin{eqnarray}}
\newcommand{\beqq}{\begin{eqnarray*}}
\newcommand{\eeq}{\end{eqnarray}}
\newcommand{\eeqq}{\end{eqnarray*}}
\begin{document}

\title{On logarithmic coefficients of some close-to-convex functions}

\author{Md Firoz Ali}
\address{Md Firoz Ali,
Department of Mathematics,
Indian Institute of Technology Kharagpur,
Kharagpur-721 302, West Bengal, India.}
\email{ali.firoz89@gmail.com}

\author{A. Vasudevarao}
\address{A. Vasudevarao,
Department of Mathematics,
Indian Institute of Technology Kharagpur,
Kharagpur-721 302, West Bengal, India.}
\email{alluvasu@maths.iitkgp.ernet.in}

\subjclass[2010]{Primary 30C45, 30C55}
\keywords{Univalent, starlike, convex, close-to-convex, logarithmic coefficient.}


\begin{abstract}
The logarithmic coefficients $\gamma_n$ of an analytic and univalent function $f$ in the unit disk $\mathbb{D}=\{z\in\mathbb{C}:|z|<1\}$ with the normalization $f(0)=0=f'(0)-1$ is defined by $\log \frac{f(z)}{z}= 2\sum_{n=1}^{\infty} \gamma_n z^n$. Recently, D.K. Thomas [On the logarithmic coefficients of close to convex functions, {\it Proc. Amer. Math. Soc.} {\bf 144} (2016), 1681--1687] proved that $|\gamma_3|\le \frac{7}{12}$ for functions in a subclass of close-to-convex functions (with argument $0$) and claimed that the estimate is sharp by providing a form of a extremal function. In the present paper, we pointed out that such extremal functions do not exist and the estimate is not sharp by providing a much more improved bound for the whole class of close-to-convex functions (with argument $0$). We also determine a sharp upper bound of $|\gamma_3|$ for close-to-convex functions (with argument $0$) with respect to the Koebe function.

\end{abstract}

\thanks{}

\maketitle
\pagestyle{myheadings}
\markboth{Md Firoz Ali and A. Vasudevarao }{On logarithmic coefficients of some close-to-convex functions}

\section{Introduction}

Let $\mathcal{A}$ denote the class of analytic functions $f$ in the unit disk $\mathbb{D}=\{z\in\mathbb{C}:|z|<1\}$ normalized by $f(0)=0=f'(0)-1$. If $f\in\mathcal{A}$ then $f(z)$ has the following representation
\begin{equation}\label{p6-001}
f(z)= z+\sum_{n=2}^{\infty}a_n(f) z^n.
\end{equation}
We will simply write $a_n:=a_n(f)$ when there is no confusion. Let $\mathcal{S}$ denote the class of all univalent (i.e. one-to-one) functions in $\mathcal{A}$. A function $f\in\mathcal{A}$ is called starlike (convex respectively) if $f(\mathbb{D})$ is starlike with respect to the origin (convex respectively). Let $\mathcal{S}^*$ and $\mathcal{C}$ denote the class of starlike and convex functions in $\mathcal{S}$ respectively. It is well-known that a function $f\in\mathcal{A}$ is in $\mathcal{S}^*$ if and only if ${\rm Re\,}\left(zf'(z)/f(z)\right)>0$ for $z\in\mathbb{D}$.
Similarly, a function $f\in\mathcal{A}$ is in $\mathcal{C}$ if and only if ${\rm Re\,}\left(1+(zf''(z)/f'(z))\right)>0$ for $z\in\mathbb{D}$.
From the above it is easy to see that $f\in\mathcal{C}$ if and only if $zf'\in\mathcal{S}^*$. Given $\alpha\in(-\pi/2,\pi/2)$ and $g\in\mathcal{S}^*$, a function $f\in\mathcal{A}$ is said to be close-to-convex with argument $\alpha$ and with respect to $g$ if
\begin{equation}\label{p6-003}
{\rm Re\,} \left(e^{i\alpha}\frac{zf'(z)}{g(z)}\right)>0 \quad z\in\mathbb{D}.
\end{equation}
Let $\mathcal{K}_{\alpha}(g)$ denote the class of all such functions. Let
$$
\mathcal{K}(g):= \bigcup_{\alpha\in(-\pi/2,\pi/2)} \mathcal{K}_{\alpha}(g) \quad\mbox{ and }\quad \mathcal{K}_{\alpha}:= \bigcup_{g\in\mathcal{S}^*} \mathcal{K}_{\alpha}(g)
$$
be the classes of functions called close-to-convex functions with respect to $g$ and close-to-convex functions with
argument $\alpha$, respectively. The class
$$
\mathcal{K}:= \bigcup_{\alpha\in(-\pi/2,\pi/2)} \mathcal{K}_{\alpha}= \bigcup_{g\in\mathcal{S}^*} \mathcal{K}(g)
$$
is the class of all close-to-convex functions. It is well-known that every close-to-convex function is univalent in $\mathbb{D}$ (see \cite{Duren-book}). Geometrically, $f\in\mathcal{K}$ means that the complement of the image-domain $f(\mathbb{D})$ is the union of non-intersecting half-lines.

The logarithmic coefficients of $f\in\mathcal{S}$ are defined by
\begin{equation}\label{p6-005}
\log \frac{f(z)}{z}= 2\sum_{n=1}^{\infty} \gamma_n z^n
\end{equation}
where $\gamma_n$ are known as the logarithmic coefficients. The logarithmic coefficients $\gamma_n$ play a central role in the theory of univalent functions. Very few exact upper bounds for $\gamma_n$ seem have been established. The significance of this problem in the context of Bieberbach conjecture was pointed out by Milin in his conjecture. Milin conjectured that for $f\in\mathcal{S}$ and $n\ge 2$,
$$
\sum_{m=1}^{n}\sum_{k=1}^{m} \left(k|\gamma_k|^2-\frac{1}{k}\right)\le 0,
$$
which led De Branges, by proving this conjecture, to the proof of the Bieberbach conjecture \cite{Branges-1985}. More attention has been given to the results of an average sense (see \cite{Duren-book,Duren-Leung-1979}) than the exact upper bounds for $|\gamma_n|$. For the Koebe function $k(z)=z/(1-z)^2$, the logarithmic coefficients are $\gamma_n=1/n$. Since the Koebe function $k(z)$ plays the role of extremal function for most of the extremal problems in the class $\mathcal{S}$, it is expected that $|\gamma_n|\le \frac{1}{n}$ holds for functions in $\mathcal{S}$. But this is not true in general, even in order of magnitude \cite[Theorem 8.4]{Duren-book}. Indeed, there exists a bounded function $f$ in the class $\mathcal{S}$ with logarithmic coefficients $\gamma_n\ne O(n^{-0.83})$ (see \cite[Theorem 8.4]{Duren-book}).

By differentiating (\ref{p6-005}) and equating coefficients we obtain
\begin{equation}\label{p6-010}
\gamma_1=\frac{1}{2} a_2
\end{equation}
\begin{equation}\label{p6-015}
\gamma_2=\frac{1}{2}(a_3-\frac{1}{2}a_2^2)
\end{equation}
\begin{equation}\label{p6-020}
\gamma_3=\frac{1}{2}(a_4-a_2a_3+\frac{1}{3}a_2^3).
\end{equation}
If $f\in\mathcal{S}$ then $|\gamma_1|\le 1$ follows at once from (\ref{p6-010}). Using Fekete-Szeg\"{o} inequality \cite[Theorem 3.8]{Duren-book} in (\ref{p6-015}), we can obtain the sharp estimate
$$
|\gamma_2|\le \frac{1}{2}(1+2e^{-2})=0.635\ldots.
$$
For $n\ge 3$, the problem seems much harder, and no significant upper bound for $|\gamma_n|$ when $f\in\mathcal{S}$ appear to be known.

If $f\in\mathcal{S}^*$ then it is not very difficult to prove that $|\gamma_n|\le \frac{1}{n}$ for $n\ge 1$ and equality holds for the Koebe function $k(z)=z/(1-z)^2$.  The inequality $|\gamma_n|\le \frac{1}{n}$ for $n\ge 2$ extends to the class $\mathcal{K}$ was claimed in a paper of Elhosh \cite{Elhosh-1996}. However, Girela \cite{Girela-2000} pointed out some error in the proof of Elhosh \cite{Elhosh-1996} and, hence, the result is not substantiated. Indeed, Girela proved that for each $n\ge 2$, there exists a function $f\in\mathcal{K}$ such that $|\gamma_n|> \frac{1}{n}$. In the same paper it has been shown that $|\gamma_n|\le \frac{3}{2n}$ holds for $n\ge 1$ whenever $f$ belongs to the set of extreme points of the closed convex hull of the class $\mathcal{K}$. Recently, Thomas \cite{Thomas-2016} proved that $|\gamma_3|\le \frac{7}{12}$ for functions in $\mathcal{K}_0$ (close-to-convex functions with argument $0$) with the additional assumption that the second coefficient of the corresponding starlike function $g$ is real. Thomas claimed that this estimate is sharp and has given a form of the extremal function. But after rigorous reading of the paper \cite{Thomas-2016}, we observed that such functions do not belong to the class $\mathcal{K}_0$ (more details will be given in Section \ref{Main Results}).

By  fixing a  starlike function $g$ in the class $\mathcal{S}^*$, the inequality (\ref{p6-003}) assertions a specific  subclass of close-to-convex functions. One of such important subclass is the class of close-to-convex functions with respect to the Koebe function $k(z)=z/(1-z)^2$. In this case, the inequality (\ref{p6-003}) becomes
\begin{equation}\label{p6-021}
{\rm Re\,} \left(e^{i\alpha}(1-z)^2f'(z)\right)>0,\quad z\in\mathbb{D}
\end{equation}
and defines the subclass $\mathcal{K}_{\alpha}(k)$. Several authors have been extensively studied the class of functions $f\in\mathcal{S}$ that satisfies the  condition (\ref{p6-021})  (see \cite{Elin-Khavinson-Reich-Shoikhet-2010,Hengartner-Schober-1970,Kowalczyk-Lecko-2014,Marjono-Thomas-2016}). Geometrically (\ref{p6-021}) says that the function $h:=e^{i\delta}f$ has the boundary normalization
$$
\lim_{t\to\infty} h^{-1}(h(z)+t)=1
$$
and $h(\mathbb{D})$ is a domain such that $\{w + t : t\ge 0\}\subseteq  h(\mathbb{D})$ for every $w\in h(\mathbb{D})$. Clearly, the image domain $h(\mathbb{D})$ is convex in the positive direction of the real axis. Denote by $\mathcal{CR}^+:=\mathcal{K}_{0}(k)$ the class of close-to-convex functions with argument $0$ and with respect to Koebe function $k(z)$. That is
$$
\mathcal{CR}^+=\left\{f\in\mathcal{A}: {\rm Re\,} (1-z)^2f'(z)>0, ~~ z\in\mathbb{D} \right\}.
$$
Then clearly functions in $\mathcal{CR}^+$ are convex in the positive direction of the real axis.
In the present article, we determine the upper bound of $|\gamma_3|$ for functions in $\mathcal{K}_0$ and $\mathcal{CR}^+$.

\section{Main Results}\label{Main Results}

Let $\mathcal{P}$ denote the class of analytic functions $P$ with positive real part on $\mathbb{D}$ which has the form
\begin{equation}\label{p6-025}
P(z)= 1+\sum_{n=1}^{\infty}c_n z^n.
\end{equation}
Functions in $\mathcal{P}$ are sometimes called Carath\'{e}odory function. To prove our main results, we need some preliminary lemmas. The first one is known as Carath\'{e}odory's lemma (see \cite[p. 41]{Duren-book} for example) and the second one is due to Libera and Z{\l}otkiewicz \cite{Libera-Zlotkiewicz-1982}.

\begin{lem}\cite[p. 41]{Duren-book}\label{p6-lemma001}
For a function $P\in\mathcal{P}$ of the form (\ref{p6-025}), the sharp inequality $|c_n|\le 2$ holds for each $n\ge 1$. Equality holds for the function $P(z)=(1+z)/(1-z)$.

\end{lem}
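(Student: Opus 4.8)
The plan is to prove the coefficient bound by a direct integral estimate that exploits the positivity of ${\rm Re\,}P$, avoiding the Herglotz representation. First I would fix $r\in(0,1)$ and recover the coefficients from the Cauchy integral formula, writing
$$c_n r^n=\frac{1}{2\pi}\int_0^{2\pi}P(re^{i\theta})e^{-in\theta}\,d\theta,\qquad n\ge 1.$$
Separately, expanding $P(re^{i\theta})=\sum_{k\ge 0}c_k r^k e^{ik\theta}$ and integrating against $e^{in\theta}$ shows that the positive-frequency moments vanish, that is $\frac{1}{2\pi}\int_0^{2\pi}P(re^{i\theta})e^{in\theta}\,d\theta=0$ for every $n\ge 1$; taking complex conjugates, the same holds for $\overline{P(re^{i\theta})}$ integrated against $e^{-in\theta}$.

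The key step is then to add this vanishing integral to the expression for $c_n r^n$, which replaces $P$ by $P+\overline{P}=2\,{\rm Re\,}P$:
$$c_n r^n=\frac{1}{2\pi}\int_0^{2\pi}2\,{\rm Re\,}P(re^{i\theta})\,e^{-in\theta}\,d\theta.$$
Since ${\rm Re\,}P>0$, I can bound the modulus by moving it inside the integral and discarding the unimodular factor $e^{-in\theta}$, which gives
$$|c_n|\,r^n\le\frac{1}{2\pi}\int_0^{2\pi}2\,{\rm Re\,}P(re^{i\theta})\,d\theta=2\,{\rm Re\,}P(0)=2,$$
where the last equality is the mean value property together with the normalization $P(0)=1$. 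Letting $r\to 1^-$ yields $|c_n|\le 2$ for all $n\ge 1$.

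For sharpness, I would simply verify that $P(z)=(1+z)/(1-z)=1+2\sum_{n\ge1}z^n$ lies in $\mathcal{P}$ and has $c_n=2$ for every $n$, so the bound cannot be improved. The step I expect to require the most care is the moment-vanishing observation together with the sign manipulation that converts the integrand into $2\,{\rm Re\,}P$; everything afterwards is a routine application of positivity and the mean value property. One could alternatively derive the estimate from the Herglotz representation $P(z)=\int_0^{2\pi}\frac{1+ze^{-it}}{1-ze^{-it}}\,d\mu(t)$ with $\mu$ a probability measure, whence $c_n=2\int_0^{2\pi}e^{-int}\,d\mu(t)$ gives $|c_n|\le 2$ at once; I prefer the elementary route in order to keep the argument self-contained.
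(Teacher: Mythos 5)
Your argument is correct and complete: the moment-vanishing trick that converts the Cauchy integral for $c_nr^n$ into an integral of $2\,{\rm Re}\,P$ against $e^{-in\theta}$, followed by positivity and the mean value property, is exactly the classical proof of Carath\'eodory's lemma given in the reference the paper cites (Duren, p.~41); the paper itself offers no proof, so there is nothing to diverge from. The sharpness check for $(1+z)/(1-z)$ is also fine.
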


%


\begin{lem}\cite{Libera-Zlotkiewicz-1982}\label{p6-lemma010}
Let $P\in\mathcal{P}$ be of the form (\ref{p6-025}). Then there exist $x, t\in\mathbb{C}$ with $|x|\le 1$ and $|t|\le 1$ such that
$$
2c_2 = c_1^2 + x(4 - c_1^2)
$$
and
$$
4c_3= c_1^3+ 2(4-c_1^2)c_1x-c_1(4-c_1^2)x^2+2(4-c_1^2)(1-|x|^2)t.
$$

\end{lem}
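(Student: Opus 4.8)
The plan is to realize every $P\in\mathcal{P}$ through its correspondence with the Schur class and then read off the first three Taylor coefficients. Since $P$ has positive real part with $P(0)=1$, the function $\omega=(P-1)/(P+1)$ is analytic on $\mathbb{D}$, satisfies $|\omega|<1$ and $\omega(0)=0$, and conversely $P=(1+\omega)/(1-\omega)$. Writing $\omega(z)=\omega_1 z+\omega_2 z^2+\omega_3 z^3+\cdots$ and expanding $P=(1+\omega)\sum_{k\ge 0}\omega^k=1+2\omega+2\omega^2+2\omega^3+\cdots$, I would equate coefficients with (\ref{p6-025}) to obtain
\[
c_1=2\omega_1,\qquad c_2=2\omega_2+2\omega_1^2,\qquad c_3=2\omega_3+4\omega_1\omega_2+2\omega_1^3 .
\]
Because $\mathcal{P}$ is invariant under $P(z)\mapsto P(e^{i\theta}z)$, I would first normalize so that $c_1=2\omega_1$ is real with $c_1\in[0,2]$; this is the normalization under which the identities (written with $c_1^2$ rather than $|c_1|^2$) are meant to hold. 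If $c_1=2$ then $\omega$ is a unimodular rotation, $4-c_1^2=0$, and both relations hold trivially for any admissible $x,t$, so I may assume $c_1<2$.

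Next I would extract $\omega_2$ and $\omega_3$ by applying the Schwarz lemma twice. As $\omega(0)=0$, write $\omega(z)=z\,\phi(z)$ with $\phi$ analytic, $|\phi|\le 1$, and $\phi(0)=\omega_1=A$ where $|A|<1$. The Schur step then produces an analytic $\psi$ with $|\psi|\le 1$ such that
\[
\phi(z)=\frac{A+z\,\psi(z)}{1+\bar A\,z\,\psi(z)} .
\]
Expanding the right-hand side to order $z^2$ and matching with $\phi(z)=\omega_1+\omega_2 z+\omega_3 z^2+\cdots$ gives
\[
\omega_2=(1-|A|^2)\,\psi(0),\qquad \omega_3=(1-|A|^2)\bigl(\psi'(0)-\bar A\,\psi(0)^2\bigr).
\]
Since $\psi$ is itself a Schur function, the derivative bound $|\psi'(0)|\le 1-|\psi(0)|^2$ lets me write $\psi'(0)=(1-|\psi(0)|^2)\,t$ for some $t$ with $|t|\le 1$.

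It then remains to substitute and collect. Setting $x=\psi(0)$ (so $|x|\le 1$) and using $A=c_1/2$ together with the normalization $c_1\in[0,2]$, which gives $1-|A|^2=(4-c_1^2)/4$, the relation $c_2=2\omega_2+2\omega_1^2$ yields at once $2c_2=c_1^2+(4-c_1^2)x$. Inserting the expression for $\omega_3$ into $c_3=2\omega_3+4\omega_1\omega_2+2\omega_1^3$ and grouping the resulting four contributions --- the $c_1^3$ term from $\omega_1^3$, the $x$- and $x^2$-terms from $\omega_1\omega_2$ and $\bar A\,\psi(0)^2$, and the $t$-term from $\psi'(0)$ --- produces, after multiplication by $4$,
\[
4c_3=c_1^3+2(4-c_1^2)c_1x-c_1(4-c_1^2)x^2+2(4-c_1^2)(1-|x|^2)t .
\]

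The step that needs the most care is the second-order Schur expansion producing $\omega_3$: one must expand the denominator $1/(1+\bar A z\psi)$ to order $z^2$ and check that the factor $(1-|A|^2)$ cleanly comes out. The key conceptual point --- and the reason the statement carries $c_1^2$ rather than $|c_1|^2$ --- is that the identity $1-|A|^2=(4-c_1^2)/4$ relies on $\bar A=A$; without the reduction to real $c_1\in[0,2]$ one would only recover $4-|c_1|^2$, so this normalization is doing genuine work.
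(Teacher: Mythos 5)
The paper does not prove this lemma at all --- it is quoted verbatim from Libera and Z{\l}otkiewicz \cite{Libera-Zlotkiewicz-1982} --- so there is no in-paper proof to compare against. Your argument is correct and complete: the passage $P\mapsto\omega=(P-1)/(P+1)$ gives the coefficient relations $c_1=2\omega_1$, $c_2=2\omega_2+2\omega_1^2$, $c_3=2\omega_3+4\omega_1\omega_2+2\omega_1^3$; the Schur step $\phi=(A+z\psi)/(1+\bar A z\psi)$ correctly yields $\omega_2=(1-|A|^2)\psi(0)$ and $\omega_3=(1-|A|^2)(\psi'(0)-\bar A\psi(0)^2)$; and the Schwarz--Pick bound $|\psi'(0)|\le 1-|\psi(0)|^2$ supplies the parameter $t$. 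Substituting $A=c_1/2$, $x=\psi(0)$ reproduces both displayed identities, and you handle the degenerate case $|A|=1$ correctly. This is essentially the standard derivation of the Libera--Z{\l}otkiewicz parametrization (their original argument runs along the same lines, recursively parametrizing the coefficient body of $\mathcal{P}$), so you are not in conflict with the source. Your closing caveat is also well taken and worth keeping: as literally written, with $c_1^2$ rather than $|c_1|^2$ in the factors $4-c_1^2$, the identities require $c_1$ real, which is exactly why the paper invokes rotation invariance of $\mathcal{K}_0$ to reduce to $c_1=c\in[0,2]$ before applying the lemma.
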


In \cite{Thomas-2016},  Thomas claimed that his result (i.e. $|\gamma_3|\le 7/12$) is sharp for functions in the class $\mathcal{K}_0$
by ascertaining the equality holds for a function $f$ defined by $zf'(z)=g(z)P(z)$ where $g\in\mathcal{S}^*$ with $b_2(g)=b_3(g)=b_4(g)=2$ and $P\in\mathcal{P}$ with $c_1(P)=0$, $c_2(P)=c_3(P)=2$. But in view of  Lemma \ref{p6-lemma010}, it is easy to see that there does not exist a function $P\in\mathcal{P}$ with the property $c_1(P)=0$, $c_2(P)=c_3(P)=2$. Thus we can conclude that the result obtained by Thomas is not sharp. The main aim of the present  paper is to obtain a better upper bound for $|\gamma_3|$ for functions in the class $\mathcal{K}_0$ than that of obtained by Thomas \cite{Thomas-2016}. To prove our main results we also need the following Fekete-Szeg\"{o} inequality for functions in the class $\mathcal{S}^*$.

\begin{lem}\cite[Lemma 3]{Koepf-1987}\label{p6-lemma015}
Let $g\in\mathcal{S}^*$ be of the form $g(z)=z+\sum_{n=2}^{\infty}b_n z^n$. Then for any $\lambda\in\mathbb{C}$,
$$
|b_3-\lambda b_2^2|\le \max\{1,|3-4\lambda|\}.
$$
The inequality is sharp for $k(z)=z/(1-z)^2$ if $|3-4\lambda|\ge 1$ and for $(k(z^2))^{1/2}$ if $|3-4\lambda|<1$.

\end{lem}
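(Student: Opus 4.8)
The plan is to reduce the functional $|b_3-\lambda b_2^2|$ to a coefficient functional for Carath\'eodory functions and then invoke the Libera--Z{\l}otkiewicz representation. Since $g\in\mathcal{S}^*$, we have ${\rm Re\,}(zg'(z)/g(z))>0$, so there is a $P\in\mathcal{P}$ of the form (\ref{p6-025}) with $zg'(z)=g(z)P(z)$. Writing $g(z)=z+b_2z^2+b_3z^3+\cdots$ and comparing the coefficients of $z^2$ and $z^3$ on both sides, I would obtain $b_2=c_1$ and $b_3=\tfrac12(c_1^2+c_2)$. Substituting these into the functional gives
$$
b_3-\lambda b_2^2=\frac{1}{2}\big(c_2-(2\lambda-1)c_1^2\big),
$$
so that, setting $\mu=2\lambda-1$, everything reduces to the sharp estimate $|c_2-\mu c_1^2|\le 2\max\{1,|2\mu-1|\}$ for $P\in\mathcal{P}$; note that $|2\mu-1|=|3-4\lambda|$.

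To prove this Carath\'eodory estimate I would first use the rotation invariance of $\mathcal{P}$: replacing $P(z)$ by $P(e^{-i\theta}z)$ sends $c_1\mapsto e^{-i\theta}c_1$ and $c_2\mapsto e^{-2i\theta}c_2$, hence multiplies $c_2-\mu c_1^2$ by a unimodular factor and leaves its modulus unchanged. This lets me assume that $c_1=c\in[0,2]$ is real and nonnegative (using $|c_1|\le 2$ from Lemma \ref{p6-lemma001}). Lemma \ref{p6-lemma010} then provides $x$ with $|x|\le 1$ such that $2c_2=c^2+x(4-c^2)$, whence
$$
c_2-\mu c_1^2=\Big(\tfrac12-\mu\Big)c^2+\frac{x}{2}(4-c^2).
$$
Applying the triangle inequality with $|x|\le 1$ bounds the modulus by $\Phi(c):=|\tfrac12-\mu|\,c^2+\tfrac12(4-c^2)$, which is affine in $c^2\in[0,4]$. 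A two-case analysis---according to whether the slope $|\tfrac12-\mu|-\tfrac12$ is nonnegative or negative, i.e.\ whether $|2\mu-1|\ge 1$ or $<1$---shows that the maximum of $\Phi$ is $2|2\mu-1|$ in the first case (attained at $c=2$) and $2$ in the second (attained at $c=0$). Dividing by $2$ returns $\max\{1,|3-4\lambda|\}$.

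For sharpness I would exhibit the two extremal starlike functions directly. The Koebe function $k(z)=z/(1-z)^2$ has $b_2=2$, $b_3=3$, so $b_3-\lambda b_2^2=3-4\lambda$, which meets the bound precisely when $|3-4\lambda|\ge 1$; this corresponds to the extremal Carath\'eodory function $(1+z)/(1-z)$ with $c_1=2$. The function $(k(z^2))^{1/2}=z/(1-z^2)=z+z^3+z^5+\cdots$ has $b_2=0$, $b_3=1$, so the functional equals $1$, which meets the bound when $|3-4\lambda|<1$; this is the case $c_1=0$.

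The computations here are all routine; the only point demanding care is the passage to complex $\lambda$ (equivalently complex $\mu$). The rotation normalization is what makes the factor $4-c_1^2$ appearing in Lemma \ref{p6-lemma010} real and nonnegative, and thereby legitimizes replacing it by $4-c^2$ in the triangle-inequality step; without reducing to real $c_1$ one cannot split the modulus so cleanly, and handling the resulting cross terms directly would be the main nuisance. Once that reduction is in place, the estimate is a one-variable optimization.
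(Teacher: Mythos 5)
Your proof is correct. Note, however, that the paper does not prove this lemma at all: it is quoted verbatim from Koepf \cite[Lemma 3]{Koepf-1987}, so there is no in-paper argument to compare against. Your derivation is the standard route and it is internally consistent with the paper's toolkit: the reduction $b_2=c_1$, $b_3=\tfrac12(c_1^2+c_2)$ via $zg'(z)=g(z)P(z)$ is right, the identity $b_3-\lambda b_2^2=\tfrac12\bigl(c_2-(2\lambda-1)c_1^2\bigr)$ checks out, and the rotation normalization $c_1=c\in[0,2]$ is exactly what is needed to apply Lemma \ref{p6-lemma010} as stated (where $4-c_1^2$ is written for a real $c_1$) and to make the triangle-inequality split legitimate. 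The one-variable optimization of $\Phi(c)=\bigl|\tfrac12-\mu\bigr|c^2+\tfrac12(4-c^2)$ over $c^2\in[0,4]$ correctly yields $2\max\{1,|2\mu-1|\}$ with $|2\mu-1|=|3-4\lambda|$, and your sharpness examples ($b_2=2,b_3=3$ for $k$; $b_2=0,b_3=1$ for $(k(z^2))^{1/2}$) match the two cases. In effect you have supplied a self-contained proof, using only the paper's Lemmas \ref{p6-lemma001} and \ref{p6-lemma010}, of a result the authors merely cite.
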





For $f\in\mathcal{K}_0$ (close-to-convex functions with argument $0$), we obtained the following improved result for $|\gamma_3|$ (compare \cite{Thomas-2016}).

\begin{thm}\label{p6-theorem-001}
If $f\in\mathcal{K}_0$ then $|\gamma_3|\le \frac{1}{18} (3+4 \sqrt{2})=0.4809$.
\end{thm}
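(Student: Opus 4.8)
The plan is to use the factorisation that defines $\mathcal K_0$: since $f\in\mathcal K_0$ there are $g\in\mathcal S^*$ and $P\in\mathcal P$ (as in (\ref{p6-025})) with $zf'(z)=g(z)P(z)$. Writing $g(z)=z+\sum_{n\ge 2}b_nz^n$ and comparing the coefficients of $z^2,z^3,z^4$ I record $a_2=\tfrac12(b_2+c_1)$, $a_3=\tfrac13(b_3+b_2c_1+c_2)$, $a_4=\tfrac14(b_4+b_3c_1+b_2c_2+c_3)$. Substituting into (\ref{p6-020}) and clearing denominators, I would reorganise $48\gamma_3$ into four transparent blocks,
\[
48\gamma_3=\bigl(6b_4-4b_2b_3+b_2^3\bigr)+c_1\bigl(2b_3-b_2^2\bigr)+b_2\bigl(2c_2-c_1^2\bigr)+\bigl(6c_3-4c_1c_2+c_1^3\bigr).
\]
Because $|\gamma_3|$ is invariant under $f(z)\mapsto e^{-i\theta}f(e^{i\theta}z)$, which keeps $f$ in $\mathcal K_0$, I may rotate so that $c_1=c\in[0,2]$.

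Next I would estimate each block by a sharp constant for the relevant class. For the first block I use the algebraic identity $6b_4-4b_2b_3+b_2^3=12\gamma_3(g)+b_2(2b_3-b_2^2)$; combined with the starlike estimate $|\gamma_3(g)|\le\tfrac13$, the coefficient bound $|b_2|\le2$, and the Fekete--Szeg\H{o} inequality of Lemma \ref{p6-lemma015} with $\lambda=\tfrac12$ (which gives $|2b_3-b_2^2|\le2$), this yields $|6b_4-4b_2b_3+b_2^3|\le8$, the Koebe value; equivalently, the starlike representation $zg'=gQ$ with $Q\in\mathcal P$ collapses this block to $2q_3+q_1q_2$, bounded by $8$ through Lemma \ref{p6-lemma001}. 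The same Fekete--Szeg\H{o} bound gives $|c_1(2b_3-b_2^2)|\le2c$. For the $P$-blocks I apply Lemma \ref{p6-lemma010}: it gives $2c_2-c_1^2=x(4-c^2)$ with $|x|\le1$, whence $|b_2(2c_2-c_1^2)|\le2(4-c^2)|x|$, and an explicit formula for $4c_3$ that turns the last block into a concrete function of $c$, $x$ and $t$ with $|x|,|t|\le1$.

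Putting the blocks together, the estimate reduces to bounding
\[
\Bigl|\,\tfrac12 c^3+c(4-c^2)x-\tfrac32 c(4-c^2)x^2+3(4-c^2)(1-|x|^2)t\,\Bigr|
\]
and adding $8+2c+2(4-c^2)|x|$. I would first choose the phase of $t$ optimally, so that $t$ contributes $3(4-c^2)(1-|x|^2)$, and then — this is the decisive point — estimate the remaining Carath\'eodory cubic $\tfrac12 c^3+c(4-c^2)x-\tfrac32 c(4-c^2)x^2$ by maximising its modulus over the \emph{argument} of $x$ rather than by the triangle inequality. Writing $x=\mu e^{i\varphi}$, this last maximum is a concave quadratic in $\cos\varphi$, so carrying it out produces a square root; this is the origin of the radical. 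The problem then collapses to maximising an explicit function of the two real variables $c\in[0,2]$ and $\mu\in[0,1]$. Optimising in $\mu$ first (a concave quadratic, hence at an interior critical point or at $\mu=1$, with a case split in $c$) and then in $c$ over $[0,2]$ gives the stated value $\tfrac1{18}(3+4\sqrt2)$, with $\sqrt2$ emerging from this optimisation.

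The main obstacle is sharpness of the estimates: bounding every monomial separately by the triangle inequality is far too lossy to beat Thomas's $\tfrac7{12}$, and two refinements are essential — collapsing the starlike block to the single sharp constant $8$ via $|\gamma_3(g)|\le\tfrac13$, and retaining the genuine modulus of the Carath\'eodory cubic (by optimising over the argument of $x$) instead of estimating its terms one at a time, since the latter flips the sign of the quadratic term in $\mu$ and inflates the bound. What remains is essentially computational: the two-variable maximisation, including the case analysis for the $\mu$-maximiser and the verification that the interior critical value in $c$ dominates both endpoints $c=0$ and $c=2$.
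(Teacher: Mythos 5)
Your proposal is correct and follows essentially the same route as the paper: the factorisation $zf'=gP$, the identical block decomposition of $48\gamma_3$ with the bound $|6b_4-4b_2b_3+b_2^3|\le 8$ (the paper gets it from $|\gamma_3(g)|\le\tfrac13$ plus Lemma \ref{p6-lemma015}, equivalent to your $2q_3+q_1q_2$ computation), the Libera--Z{\l}otkiewicz parametrisation, the rotation making $c_1=c\in[0,2]$, and a final maximisation of $\psi+|\phi|$ over $(c,|x|,\arg x)$, which the paper organises as a face-by-face analysis of the cube $[0,2]\times[0,1]\times[-1,1]$ rather than eliminating $\arg x$ first, locating the maximum at $(c,r,p)=\bigl(2(\sqrt{2}-1),\tfrac13(1+\sqrt{2}),-1\bigr)$. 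One cosmetic correction: since the maximum sits at the endpoint $p=\cos\varphi=-1$, the $\sqrt{2}$ in the answer arises from the critical-point equations in $(c,|x|)$ on that face, not from optimising over the argument of $x$.
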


\begin{proof}
Let $f\in\mathcal{K}_0$ be of the form (\ref{p6-001}). Then there exists a starlike function $g(z)=z+\sum_{n=2}^{\infty}b_n z^n$ and a Carath\'{e}odory function $P\in\mathcal{P}$ of the form (\ref{p6-025}) such that
\begin{equation}\label{p6-029a}
zf'(z)=g(z)P(z).
\end{equation}
A comparison of the coefficients on the both sides of (\ref{p6-029a}) yields
\begin{align*}
a_2&=\frac{1}{2}(b_2+c_1)\\
a_3&=\frac{1}{3}(b_3+b_2c_1+c_2)\\
a_4&=\frac{1}{4}(b_4+b_3c_1+b_2c_2+c_3).
\end{align*}
By substituting the above $a_2, a_3$ and $a_4$  in (\ref{p6-020}) and then further simplification gives
\begin{align}\label{p6-030}
2\gamma_3
&= a_4-a_2a_3+\frac{1}{3}a_2^3\\
&=\frac{1}{24}\left((6b_4-4b_2b_3+b_2^3)+\frac{c_1}{2}\left(b_3-\frac{1}{2}b_2^2\right)+b_2(2c_2-c_1^2)+c_1^3-4c_1c_2+6c_3\right)\nonumber.
\end{align}
In view of Lemma \ref{p6-lemma010} and writing $c_2$ and $c_3$ in terms of $c_1$ we obtain
\begin{align}\label{p6-035}
48\gamma_3&= (6b_4-4b_2b_3+b_2^3)+ 2c_1\left(b_3-\frac{1}{2}b_2^2\right)+ b_2x(4-c_1^2)\\
&\quad +\frac{1}{2}c_1^3+c_1x(4-c_1^2)-\frac{3}{2}c_1x^2(4-c_1^2)+3(4-c_1^2)(1-|x|^2)t,\nonumber
\end{align}
where $|x|\le 1$ and $|t|\le 1$. Note that if $\gamma_3(g)$ denote the third logarithmic coefficient of $g\in\mathcal{S}^*$ then $|\gamma_3(g)|=\frac{1}{2}|b_4-b_2b_3+\frac{1}{3}b_2^3|\le \frac{1}{3}$. Since $g\in\mathcal{S}^*$, in view of Lemma \ref{p6-lemma015} we obtain
\begin{equation}\label{p6-040}
|6b_4-4b_2b_3+b_2^3|\le 6|b_4-b_2b_3+\frac{1}{3}b_2^3|+2|b_2||b_3-\frac{1}{2}b_2^2|\le 8.
\end{equation}
Since the class $\mathcal{K}_0$ is invariant under rotation, without loss of generality we can assume that $c_1=c$, where $0\le c\le 2$. Taking modulus on both the sides of (\ref{p6-035}) and then applying triangle inequality and further using the inequality (\ref{p6-040}) and Lemma \ref{p6-lemma015}, it follows that
$$
48|\gamma_3|\le 8+ 2c+ 2|x|(4-c^2)+\left|\frac{1}{2}c^3+cx(4-c^2)-\frac{3}{2}cx^2(4-c^2)\right|+3(4-c^2)(1-|x|^2),
$$
where we have also used the fact $|t|\le 1$. Let $x=re^{i\theta}$ where $0\le r\le 1$ and $0\le\theta\le 2\pi$. For simplicity, by writing $\cos\theta=p$ we obtain
\begin{equation}\label{p6-042}
48|\gamma_3|\le \psi(c,r)+\left|\phi(c,r,p)\right|=:F(c,r,p)
\end{equation}
where $\psi(c,r)=8+ 2c+ 2r(4-c^2)+3(4-c^2)(1-r^2)$ and
\begin{align*}
\phi(c,r,p)
&=\left(\frac{1}{4}c^6+c^2r^2(4-c^2)^2+\frac{9}{4}c^2r^4(4-c^2)^2+c^4(4-c^2)rp\right.\\
&\qquad\quad \left.-\frac{3}{2}c^4r^2(4-c^2)(2p^2-1)-3c^2(4-c^2)r^3p \right)^{1/2}.
\end{align*}
Thus we need to find the maximum value of $F(c,r,p)$ over the rectangular cube $R:=[0,2]\times[0,1]\times[-1,1]$.

By elementary calculus one can verify the followings:
\begin{align*}
&\max_{0\le r\le 1} \psi(0,r)=\psi\left(0,\frac{1}{3}\right)=\frac{64}{3},\quad \max_{0\le r\le 1} \psi(2,r)=12,\\[2mm]
&\max_{0\le c\le 2} \psi(c,0)=\psi\left(\frac{1}{3},0\right)=\frac{61}{3},\quad \max_{0\le c\le 2} \psi(c,1)=\psi(0,1)=16 \quad\mbox { and }\\[2mm]
&\max_{(c,r)\in[0,2]\times[0,1]} \psi(c,r)=\psi\left(\frac{3}{10},\frac{1}{3}\right)=\frac{649}{30}=21.6333.
\end{align*}
We first find the maximum value of $F(c,r,p)$ on the boundary of $R$, i.e on the six faces of the rectangular cube $R$.

On the face $c=0$, we have $F(0,r,p)=\psi(0,r)$, where $(r,p)\in R_1:=[0,1]\times[-1,1]$. Thus
$$
\max_{(r,p)\in R_1} F(0,r,p)= \max_{0\le r\le 1} \psi(0,r)=\psi\left(0,\frac{1}{3}\right)=\frac{64}{3}=21.33.
$$

On the face $c=2$, we have $F(2,r,p)= 16$, where $(r,p)\in R_1$.

On the face $r=0$, we have $F(c,0,p)=8+ 2c+3(4-c^2)+\frac{1}{2}c^3$, where $(c,p)\in R_2:=[0,2]\times[-1,1]$. By using elementary calculus it is easy to see that
$$
\max_{(c,p)\in R_2} F(c,0,p)= F\left(\frac{2}{3} (3-\sqrt{6}),0,p\right)=\frac{16}{9} \left(9+\sqrt{6}\right)=20.3546.
$$

On the face $r=1$, we have $F(c,1,p)=\psi(c,1)+ |\phi(c,1,p)|$, where $(c,p)\in R_2$. We first prove that $\phi(c,1,p)\ne 0$ in the interior of $R_2$.
On the contrary, if $\phi(c,1,p)=0$ in the interior of $R_2$ then
$$
|\phi(c,1,p)|^2=\left|\frac{1}{2}c^3+ce^{i\theta}(4-c^2)-\frac{3}{2}ce^{2i\theta}(4-c^2)\right|^2=0
$$
and hence
\begin{equation}\label{p6-042a}
\frac{1}{2}c^3+cp(4-c^2)-\frac{3}{2}c(4-c^2)(2p^2-1)=0 ~\mbox{ and } c(4-c^2)\sin\theta-\frac{3}{2}c(4-c^2)\sin2\theta=0.
\end{equation}
Further, (\ref{p6-042a}) reduces to
$$
\frac{1}{2}c^2+p(4-c^2)-\frac{3}{2}(4-c^2)(2p^2-1)=0 \quad\mbox{and}\quad 1-3p=0,
$$
which is equivalent to $p=1/3$ and $c^2=6$. This contradicts the range of $c\in (0,2)$.  Thus $\phi(c,1,p)\ne 0$ in the interior of $R_2$.

Next, we prove that $F(c,1,p)$ has no maximum at any interior point of $R_2$. Suppose that $F(c,1,p)$  has the maximum at an interior point of $R_2$. Then at such point $\frac{\partial F(c,1,p)}{\partial c}=0$  and $\frac{\partial F(c,1,p)}{\partial p}=0$. From $\frac{\partial F(c,1,p)}{\partial p}=0$, (for points in the interior of $R_2$), a straight forward calculation gives
\begin{equation}\label{p6-050}
p=\frac{2 \left(c^2-3\right)}{3 c^2}.
\end{equation}
Substituting the value of $p$ as given in (\ref{p6-050}) in the relation $\frac{\partial F(c,1,p)}{\partial c}=0$ and further simplification gives
\begin{equation}\label{p6-052}
3c^3-2c+(2c-1) \sqrt{6(c^2+2)}=0.
\end{equation}
It is easy to show that the function $\rho(c)=3c^3-2c+(2c-1) \sqrt{6(c^2+2)}$ is strictly increasing in $(0,2)$. Since $\rho(0)<0$ and $\rho(2)>0$, the equation (\ref{p6-052}) has exactly one solution in $(0,2)$. By solving the equation (\ref{p6-052}) numerically, we obtain the approximate root in $(0,2)$ as $0.5772$. But the corresponding value of $p$ obtained by (\ref{p6-050}) is $-5.3365$ which does not belong to $(-1,1)$. Thus $F(c,1,p)$ has no maximum at any interior point of $R_2$.

Thus we find the maximum value of $F(c,1,p)$ on the boundary of $R_2$. Clearly, $F(0,1,p)=F(2,1,p)=16$,
$$
F(c,1,-1)=
\begin{cases}
8+ 2c+ 2(4-c^2)+c(10-3c^2)& \mbox{for}\quad 0\le c\le \sqrt{\frac{10}{3}}\\[2mm]
8+ 2c+ 2(4-c^2)-c(10-3c^2)& \mbox{for}\quad \sqrt{\frac{10}{3}}< c\le 2
\end{cases}
$$
and
$$
F(c,1,1)=
\begin{cases}
8+ 2c+ 2(4-c^2)+c(2-c^2)& \mbox{for}\quad 0\le c\le \sqrt{2}\\[2mm]
8+ 2c+ 2(4-c^2)-c(2-c^2)& \mbox{for}\quad \sqrt{2}< c\le 2.
\end{cases}
$$
By using elementary calculus we find that
$$
\max_{0\le c\le 2}F(c,1,-1)= F\left(\frac{2}{9} (2 \sqrt{7}-1),1,-1\right)=\frac{8}{243} \left(403+112 \sqrt{7}\right)=23.023\quad\mbox{ and }
$$
$$
\max_{0\le c\le 2}F(c,1,1)= F\left(\frac{2}{3},1,1\right)=\frac{427}{27} =17.48.
$$
Hence,
$$
\max_{(c,p)\in R_2} F(c,1,p)= F\left(\frac{2}{9} (2 \sqrt{7}-1),1,-1\right)=\frac{8}{243} \left(403+112 \sqrt{7}\right)=23.023.
$$

On the face $p=-1$,
$$
F(c,r,-1)=
\begin{cases}
\psi(c,r)+\eta_1(c,r) & \mbox{ for }\quad \eta_1(c,r)\ge 0\\[2mm]
\psi(c,r)-\eta_1(c,r)& \mbox{ for }\quad \eta_1(c,r)< 0,
\end{cases}
$$
where $\eta_1(c,r)=c^3 (3r^2+2r+1)-4cr(3r+2)$ and $(c,r)\in R_3:=[0,2]\times[0,1]$. Differentiating partially $F(c,r,-1)$ with respect to $c$ and $r$ and a routine calculation shows that
$$
\max_{(c,r)\in  {\rm int \,}R_3\setminus S_1} F(c,r,-1)= F\left(2(\sqrt{2}-1),\frac{1}{3}(1+\sqrt{2}),-1\right)=\frac{8}{3} (3+4 \sqrt{2})=23.0849,
$$
where $S_1=\{(c,r)\in R_3: \eta_1(c,r)=0\}$. Now we find the maximum value of $F(c,r,-1)$ on the boundary of $R_3$ and on the set $S_1$. Note that
$$
\max_{(c,r)\in S_1} F(c,r,-1)\le \max_{(c,r)\in R_3} \psi(c,r)=\frac{649}{30}=21.6333.
$$
On the other hand by using elementary calculus, as before, we find that
$$
\max_{(c,r)\in \partial R_3} F(c,r,-1)=F\left(\frac{2}{9} (2 \sqrt{7}-1),1,-1\right)=\frac{8}{243} \left(403+112 \sqrt{7}\right)=23.023,
$$
where $\partial R_3$ denotes the boundary of $R_3$. Hence, by combining the above cases we obtain
$$
\max_{(c,r)\in  R_3} F(c,r,-1)=F\left(2(\sqrt{2}-1),\frac{1}{3}(1+\sqrt{2}),-1\right)=\frac{8}{3} (3+4 \sqrt{2})=23.0849.
$$

On the face $p=1$,
$$
F(c,r,1)=
\begin{cases}
\psi(c,r)+\eta_2(c,r) & \mbox{ for }\quad \eta_2(c,r)\ge 0\\[2mm]
\psi(c,r)-\eta_2(c,r)& \mbox{ for }\quad \eta_2(c,r)< 0,
\end{cases}
$$
where $\eta_2(c,r)=c^3 (3r^2-2r+1)+4cr(3r-2)$ and $(c,r)\in R_3$. Differentiating partially $F(c,r,1)$ with respect to $c$ and $r$ and a routine calculation shows that
$$
\max_{(c,r)\in  {\rm int \,}R_3\setminus S_2} F(c,r,1)=F\left(\frac{1}{3} (10-2 \sqrt{19}),\frac{1}{3},1\right)=\frac{16}{81} \left(28+19 \sqrt{19}\right)=21.89,
$$
where $S_2=\{(c,r)\in R_3: \eta_2(c,r)=0\}$. Now, we find the maximum value of $F(c,r,1)$ on the boundary of $R_3$ and on the set $S_2$. By noting that
$$
\max_{(c,r)\in S_2} F(c,r,1)\le \max_{(c,r)\in R_3} \psi(c,r)=\frac{649}{30}=21.6333
$$
and proceeding similarly as in the previous case, we find that
$$
\max_{(c,r)\in  R_3} F(c,r,1)=F\left(\frac{1}{3} (10-2 \sqrt{19}),\frac{1}{3},1\right)=\frac{16}{81} \left(28+19 \sqrt{19}\right)=21.89.
$$

Let $S'=\{(c,r,p)\in R: \phi(c,r,p)=0\}$. Then
$$
\max_{(c,r,p)\in S'} F(c,r,p)\le \max_{(c,r)\in R_3} \psi(c,r)=\psi\left(\frac{3}{10},\frac{1}{3}\right)=\frac{649}{30}=21.6333.
$$
We prove that $F(c,r,p)$ has no maximum value at any interior point of $R\setminus S'$. Suppose that $F(c,r,p)$  has a maximum value at an interior point of $R\setminus S'$. Then at such point $\frac{\partial F}{\partial c}=0$, $\frac{\partial F}{\partial r}=0$ and $\frac{\partial F}{\partial p}=0$. Note that $\frac{\partial F}{\partial c}$, $\frac{\partial F}{\partial r}$ and $\frac{\partial F}{\partial p}$ may not exist at points in $S'$. In view of  $\frac{\partial F}{\partial p}=0$ (for points in the interior of $R\setminus S'$), a straight forward but laborious calculation gives
\begin{equation}\label{p6-060}
p= \frac{3 c^2 r^2+c^2-12 r^2}{6 c^2 r}.
\end{equation}
Substituting the value of $p$ as given in  (\ref{p6-060}) in the relations $\frac{\partial F}{\partial c}=0$ and $\frac{\partial F}{\partial r}=0$ and simplifying (again, a long and laborious calculation), we obtain
\begin{equation}\label{p6-065}
\frac{3 \sqrt{6} c^3 (1-3 r^2)+12 (c(3r^2-2r-3)+1)\sqrt{c^2+2} )+4\sqrt{6}c}{6\sqrt{c^2+2}}=0
\end{equation}
and
\begin{equation}\label{p6-070}
(4-c^2) \left(( \sqrt{6(c^2+2)}-6) r+2\right)=0.
\end{equation}
Since $0<c<2$, solving the equation (\ref{p6-070}) for $r$, we obtain
\begin{equation}\label{p6-075}
r=\frac{2}{6-\sqrt{6(c^2+2)}}.
\end{equation}
Substituting the value of $r$ in (\ref{p6-065}) and then further  simplification gives
$$
3 c^3+6 c -(6 c-2) \sqrt{6 \left(c^2+2\right)}=0.
$$
Taking the last term on the right hand side and squaring on both sides yields
\begin{equation}\label{p6-078}
3 \left(c^2+2\right) \left(3 c^4-66 c^2+48 c-8\right)=0.
\end{equation}
Clearly $c^2+2\ne 0$ in $0<c<2$. On the other hand the polynomial $q(c)=3 c^4-66 c^2+48 c-8$ has exactly two roots in $(0,2)$, one lies in $(0,1/3)$ and another lies in $(1/3,1/2)$. This can be seen using the well-known Strum theorem for isolating real roots  and hence for the sake of brevity we omit the details. By solving the equation $q(c)=0$ numerically, we obtain two approximate roots $0.2577$ and $0.4795$ in $(0,2)$. But the corresponding value of $p$ obtained from (\ref{p6-075}) and (\ref{p6-060}) are $-23.6862$ and $-6.80595$ which do not belong to $(-1,1)$. This proves that $F(c,r,p)$ has no maximum in the interior of $R\setminus S'$

Thus combining all the above cases we find that
$$
\max_{(c,r,p)\in  R} F(c,r,p)=F\left(2(\sqrt{2}-1),\frac{1}{3}(1+\sqrt{2}),-1\right)=\frac{8}{3} (3+4 \sqrt{2})=23.0849,
$$
and hence from (\ref{p6-042}) we obtain
$$
|\gamma_3|\le \frac{1}{18} (3+4 \sqrt{2})=0.4809.
$$

\end{proof}

We obtained the following sharp upper bound for $|\gamma_3|$ for functions in the class $\mathcal{CR}^+$.
\begin{thm}\label{p6-theorem-002}
Let $f\in\mathcal{CR}^+$ be of the form (\ref{p6-001}) with $1\le a_2 \le 2$. Then
\begin{equation}\label{p6-088}
|\gamma_3|\le \frac{1}{243} (28+19 \sqrt{19})=0.4560.
\end{equation}
The inequality is sharp.
\end{thm}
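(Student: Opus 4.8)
The plan is to run the same reduction as in Theorem~\ref{p6-theorem-001}, but to exploit that for $f\in\mathcal{CR}^+$ the underlying starlike function is \emph{exactly} the Koebe function, so that no estimate is spent on $g$. Writing $(1-z)^2f'(z)=P(z)$ with $P\in\mathcal{P}$ of the form (\ref{p6-025}) and expanding $f'(z)=P(z)/(1-z)^2$, I would first read off
\[
a_2=1+\tfrac12 c_1,\qquad a_3=1+\tfrac13(2c_1+c_2),\qquad a_4=1+\tfrac14(3c_1+2c_2+c_3).
\]
Substituting these into (\ref{p6-020}) and simplifying gives the clean identity
\[
48\gamma_3=8+2c_1+4c_2+6c_3-2c_1^2-4c_1c_2+c_1^3 .
\]
The hypothesis $1\le a_2\le2$ is precisely $c_1=c\in[0,2]$ (and real), and this plays exactly the role that rotation invariance played in Theorem~\ref{p6-theorem-001}, where $\mathcal{CR}^+$ itself is not rotation invariant.

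Next I would invoke Lemma~\ref{p6-lemma010} to eliminate $c_2$ and $c_3$ in favour of $c$, a parameter $x$ with $|x|\le1$, and $t$ with $|t|\le1$, obtaining
\[
48\gamma_3=\Bigl(8+2c+\tfrac12 c^3\Bigr)+(2+c)(4-c^2)x-\tfrac32 c(4-c^2)x^2+3(4-c^2)(1-|x|^2)t .
\]
The decisive difference from Theorem~\ref{p6-theorem-001} is that here I would \emph{not} peel the constant $8+2c$ and the term $2x(4-c^2)$ off into separate moduli; keeping them inside one modulus costs nothing and is what upgrades the bound to a sharp one. Choosing the argument of $t$ optimally (an equality, since $|t|\le1$ can be saturated) gives
\[
48|\gamma_3|\le\Bigl|\,8+2c+\tfrac12 c^3+(2+c)(4-c^2)x-\tfrac32 c(4-c^2)x^2\,\Bigr|+3(4-c^2)(1-|x|^2)=:H(c,x).
\]

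Writing $x=re^{i\theta}$ and $p=\cos\theta$ and expanding the modulus squared realizes $H$ as an explicit function on the cube $[0,2]\times[0,1]\times[-1,1]$, and the task becomes to maximize it. Here one structural observation carries most of the weight: the expression under the square root is a \emph{concave} quadratic in $p$, with $p^2$-coefficient $-6\,(8+2c+\tfrac12 c^3)\,c(4-c^2)r^2\le0$, and in the region where $H$ is large its vertex lies to the right of $p=1$, so the maximum in $p$ is forced to the face $p=1$. On that face $H$ agrees with the function $F$ already analysed in the proof of Theorem~\ref{p6-theorem-001} throughout the relevant region, whose maximum there was computed to be $F\!\left(\tfrac13(10-2\sqrt{19}),\tfrac13,1\right)=\tfrac{16}{81}(28+19\sqrt{19})$. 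After checking the remaining faces and excluding interior critical points exactly as in Theorem~\ref{p6-theorem-001} (the critical-point equations are polynomial and are settled by elementary calculus together with Sturm's theorem), I would conclude $\max H=\tfrac{16}{81}(28+19\sqrt{19})$ and hence $|\gamma_3|\le\tfrac{1}{243}(28+19\sqrt{19})$.

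For sharpness I would reverse the construction: the maximizing data $c=\tfrac13(10-2\sqrt{19})$, $x=\tfrac13$, $t=1$ all lie in their admissible ranges, so the Libera--Z{\l}otkiewicz parametrization can be inverted to produce a genuine $P\in\mathcal{P}$ with exactly these first three coefficients (for instance a suitable convex combination of maps $\frac{1+\eta z}{1-\eta z}$, $|\eta|=1$), after which $f(z)=\int_0^z P(s)(1-s)^{-2}\,ds$ is an extremal function in $\mathcal{CR}^+$ along which every inequality above is an equality. The main obstacle I anticipate is the three–variable constrained maximization of the square-root function $H$ — specifically, proving rigorously that the maximum sits on the face $p=1$ and that no competing critical points occur in the interior or on the other faces — while verifying that the extremal triple $(c,x,t)$ is realized by an honest Carath\'eodory function is a secondary but necessary point.
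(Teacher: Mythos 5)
Your setup is the paper's own, step for step: the factorization $(1-z)^2f'(z)=P(z)$, the identity $48\gamma_3=8+2c_1+\tfrac12c_1^3+(2+c_1)(4-c_1^2)x-\tfrac32c_1(4-c_1^2)x^2+3(4-c_1^2)(1-|x|^2)t$ (which agrees with the paper's (\ref{p6-090})), the translation of $1\le a_2\le 2$ into $c=c_1\in[0,2]$, the decision to keep $8+2c+\tfrac12c^3$ and the $x$-terms inside a single modulus, the resulting function $F=\psi+|\phi|$ on the cube $[0,2]\times[0,1]\times[-1,1]$, the maximizer $\bigl(\tfrac13(10-2\sqrt{19}),\tfrac13,1\bigr)$, and the sharpness argument via realizing $(c_1,x,t)$ by an explicit Carath\'eodory function. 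So the route is the same; the question is whether your shortcut through the maximization works.

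It does not, as stated. You are right that $\phi^2$ is a concave quadratic in $p$ with leading coefficient $-6c r^2(4-c^2)(8+2c+\tfrac12c^3)$, but concavity pushes the maximum over $p\in[-1,1]$ \emph{into the interior} (to the vertex) whenever the vertex lies in $(-1,1)$ --- it forces the maximum to the face $p=1$ only where the vertex satisfies $p^*\ge 1$, and that fails on a substantial part of the cube. For instance on the face $r=1$ the vertex is $p^*=\frac{2(c^3-2c+4)}{3c(c^2-2c+8)}$ (the paper's (\ref{p6-100})), which equals $\tfrac13$ at $c=2$ and is about $0.26$ at $c\approx 1.3$; the paper accordingly finds genuine critical points near $(0.326,1,0.927)$ and $(1.299,1,0.260)$ and must evaluate $F$ there ($\approx 15.83$ and $\approx 18.63$) to confirm they stay below $\tfrac{16}{81}(28+19\sqrt{19})\approx 21.89$. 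So the qualifier ``in the region where $H$ is large'' is doing all the work and is unproved; in the end you cannot avoid the face-by-face and interior critical-point analysis that the paper carries out, which you yourself flag as the main obstacle. Two smaller points: on the face $p=1$ your $H$ coincides with the $F$ of Theorem \ref{p6-theorem-001} only because the expressions inside both moduli are nonnegative there, which should be said; and for sharpness the extremal data are $x=\tfrac13$ (not $|x|=1$), so the representing $P$ must be exhibited concretely (as in (\ref{p6-140})) rather than waved at as ``a suitable convex combination,'' since Lemma \ref{p6-lemma010} is stated only as an existence result in one direction.
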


\begin{proof}
If $f\in\mathcal{CR}^+$ then there exists a Carath\'{e}odory function $P\in\mathcal{P}$ of the form (\ref{p6-025}) such that $zf'(z)=g(z)P(z)$, where $g(z):=k(z)=z/(1-z)^2$. Following the same method as used in Theorem \ref{p6-theorem-001} and noting that  $g(z):=k(z)=z+2z^2+3z^3+4z^4+\cdots$, a simple computation in (\ref{p6-035}) shows that
\begin{equation}\label{p6-090}
48\gamma_3= 8+ 2c_1+\frac{1}{2}c_1^3+ (4-c_1^2)(2x +c_1x-\frac{3}{2}c_1x^2)+3(4-c_1^2)(1-|x|^2)t,
\end{equation}
where $|x|\le 1$ and $|t|\le 1$. Since $1\le a_2 \le 2$ and $2a_2=2+c_1$, then $0\le c_1\le 2$. Taking modulus on the both sides of  (\ref{p6-090}) and then applying triangle inequality and writing $c=c_1$, it follows that
$$
48|\gamma_3|\le \left|8+ 2c_1+\frac{1}{2}c_1^3+ (4-c_1^2)(2x +c_1x-\frac{3}{2}c_1x^2)\right|+3(4-c^2)(1-|x|^2),
$$
where we have also used the fact $|t|\le 1$. Let $x=re^{i\theta}$ where $0\le r\le 1$ and $0\le\theta\le 2\pi$. For simplicity, by writing $\cos\theta=p$ we obtain
\begin{equation}\label{p6-095}
48|\gamma_3|\le \psi(c,r)+\left|\phi(c,r,p)\right|=:F(c,r,p)
\end{equation}
where $\psi(c,r)=3(4-c^2)(1-r^2)$ and
\begin{align*}
\phi(c,r,p)
&=\left( (8+2c+\frac{1}{2}c^3)^2 +r^2(4-c^2)^2(4+c^2+\frac{9}{4}c^2r^2+4c-6crp-3c^2rp)\right.\\
&\qquad\quad \left. +2(4-c^2)(8+2c+\frac{1}{2}c^3)(2rp+crp-\frac{3}{2}cr^2(2p^2-1)) \right)^{1/2}.
\end{align*}
Thus we need to find the maximum value of $F(c,r,p)$ over the rectangular cube $R=[0,2]\times[0,1]\times[-1,1]$.

We first find the maximum value of $F(c,r,p)$ on the boundary of $R$, i.e on the six faces of the rectangular cube $R$. As before, let $R_1=[0,1]\times[-1,1], R_2=[0,2]\times[-1,1]$ and $R_3=[0,2]\times[0,1]$. By elementary calculus it is not very difficult to prove that
\begin{align*}
\max_{(r,p)\in R_1} F(0,r,p)&= F(0,\frac{1}{3},1)=\frac{64}{3}=21.33,\\
\max_{(r,p)\in R_1} F(2,r,p)&= F(2,r,p)= 16,\\
\max_{(c,p)\in R_2} F(c,0,p)&= F\left(\frac{2}{3} (3-\sqrt{6}),0,p\right)=\frac{16}{9} \left(9+\sqrt{6}\right)=20.3546.
\end{align*}

On the face $r=1$, we have $F(c,1,p)=|\phi(c,1,p)|$ where $(c,p)\in R_2$. As in the proof of Theorem \ref{p6-theorem-001}, one can verify that $\phi(c,1,p)\ne 0$ in the interior of $R_2$ (otherwise, one can simply proceed to find maximum value $F(c,1,p)$ at an interior point of $R_2\setminus T$, where $T=\{(c,p)\in R_2: \phi_1(c,1,p)=0\}$, as $F(c,1,p)=0$ in $T$). Suppose that $F(c,1,p)$  has the maximum value at an interior point of $R_2$. Then at such point $\frac{\partial F}{\partial c}=0$ and $\frac{\partial F}{\partial p}=0$. From $\frac{\partial F}{\partial p}=0$ (for points in the interior of $R_2$), it follows that
\begin{equation}\label{p6-100}
p=\frac{2 \left(c^3-2 c+4\right)}{3 c \left(c^2-2 c+8\right)}.
\end{equation}
By substituting the above  value of $p$ given in  (\ref{p6-100}) in the relation $\frac{\partial F}{\partial c}=0$ and further computation (a long and laborious calculation) gives
\begin{equation*}\label{p6-105}
3 c^8-17 c^7+76 c^6-136 c^5+120 c^4+640 c^3-832 c^2-192 c+128=0.
\end{equation*}
This equation has exactly two real roots in $(0,2)$, one lies in $(0,1)$ and another lies in $(1,2)$. This can be seen using the well-known Strum theorem for isolating real roots  therefore  for the sake of brevity we omit the details. Solving this equation numerically we obtain two approximate roots $0.3261$ and $1.2994$ in $(0,2)$  and the corresponding values of $p$ are $0.9274$ and $0.2602$ respectively. Thus the extremum points of $F(c,1,p)$ in the interior of $R_2$ lie in a small neighborhood of the points $A_1=(0.3261,1,0.9274)$ and $A_2=(1.2994,1,0.2602)$ (on the plane $r=1$). Now $F(A_1)=15.8329$ and $F(A_2)=18.6303$. Since the function $F(c,1,p)$ is uniformly continuous on $R_2$, the value of $F(c,1,p)$ would not vary too much in the neighborhood of the points $A_1$ and $A_2$. Again, proceeding similarly as in the proof of Theorem \ref{p6-theorem-001}, we find that
$$
\max_{(c,p)\in \partial R_2} F(c,1,p)= F(2,1,p)=16
$$
and hence
$$
\max_{(c,p)\in R_2} F(c,1,p) \thickapprox 18.6306< \frac{64}{3}.
$$

On the face $p=-1$,
$$
F(c,r,-1)=
\begin{cases}
\psi(c,r)+\eta_1(c,r) & \mbox{ for }\quad \eta_1(c,r)\ge 0\\[2mm]
\psi(c,r)-\eta_1(c,r)& \mbox{ for }\quad \eta_1(c,r)\le 0,
\end{cases}
$$
where $\eta_1(c,r)=c^3-3cr^2 (4-c^2)+2 (c-2) (c+2)^2 r+4 c+16$ and $(c,r)\in R_3$. Again, proceeding similarly as in the proof of Theorem \ref{p6-theorem-001}, we can show that $F(c,r,-1)$ has no maximum in the interior of $R_3\setminus S_1$, where $S_1=\{(c,r)\in R_3: \eta_1(c,r)=0\}$. Computing the maximum value on the boundary of $R_3$ and on the set $S_1$ we conclude that
$$
\max_{(c,r)\in R_3} F(c,r,-1)= F(0,0,-1)=20.
$$

On the face $p=1$, we have $F(c,r,1)=\psi(c,r)+\eta_2(c,r)$, where
\begin{align*}
\eta_2(c,r)&=(c+2) (8-2c+c^2+8r-2c^2r-6cr^2+3c^2r^2)\\
& \ge (c+2)\left(3+(1-c)^2+r(8-2c^2)+r^2(3c^2-6c+4)\right)\\
&\ge 0
\end{align*}
for $(c,r)\in R_3$. Differentiating partially $F(c,r,1)$ with respect to $c$ and $r$ and a routine calculation shows that
$$
\max_{(c,r)\in  {\rm int \,}R_3} F(c,r,1)=F\left(\frac{1}{3} (10-2 \sqrt{19}),\frac{1}{3},1\right)=\frac{16}{81} \left(28+19 \sqrt{19}\right)=21.8902,
$$
and on the boundary of $R_3$ we have
$$
\max_{(c,r)\in  \partial R_3} F(c,r,1)=F(0,\frac{1}{3},1)=\frac{64}{3}=21.33.
$$
Thus,
$$
\max_{(c,r)\in R_3} F(c,r,1)=F\left(\frac{1}{3} (10-2 \sqrt{19}),\frac{1}{3},1\right)=\frac{16}{81} \left(28+19 \sqrt{19}\right)=21.8902.
$$

Let $S'=\{(c,r,p)\in R: \phi(c,r,p)=0\}$. Then
$$
\max_{(c,r,p)\in S'} F(c,r,p)\le \max_{(c,r)\in R_3} \psi(c,r)=12.
$$
We now prove that $F(c,r,p)$ has no maximum at an interior point of $R\setminus S'$. Suppose that $F(c,r,p)$  has a maximum at an interior point of $R\setminus S'$. Then at such point $\frac{\partial F}{\partial c}=0$, $\frac{\partial F}{\partial r}=0$ and $\frac{\partial F}{\partial p}=0$. Note that $\frac{\partial F}{\partial c}$, $\frac{\partial F}{\partial r}$ and $\frac{\partial F}{\partial p}$ may not exist at points in $S'$. In view of  $\frac{\partial F}{\partial p}=0$ (for points in the interior of $R\setminus S'$), a straight forward but laborious calculation gives
\begin{equation}\label{p6-125}
p= \frac{3 c^3 r^2+c^3-12 c r^2+4 c+16}{6cr (c^2-2 c+8)}.
\end{equation}
Substituting the value of $p$ given in  (\ref{p6-125}) in the relation $\frac{\partial F}{\partial r}=0$ and then further simplifying (again, a long and laborious calculation), we obtain
\begin{equation}\label{p6-130}
r(4-c^2) \left( c \sqrt{\frac{6(c^3-4 c^2+14 c+4)}{c(c^2-2 c+8)}}-6\right)=0.
\end{equation}
Since $0<c<2$ and $0<r<1$, we can divide by $r(4-c^2)$ on both the sides of (\ref{p6-130}). Further, a simple computation shows that
\begin{equation*}\label{p6-135}
\frac{6 (4-c^2) (c^2-4 c+12)}{c^2-2 c+8}=0.
\end{equation*}
But this equation has no real roots in $(0,2)$. Therefore, $F(c,r,p)$ has no maximum at an interior point of $R\setminus S'$.

Thus combining all the cases we find that
$$
\max_{(c,r,p)\in R} F(c,r,p)=F\left(\frac{1}{3} (10-2 \sqrt{19}),\frac{1}{3},1\right)=\frac{16}{81} \left(28+19 \sqrt{19}\right)=21.8902,
$$
and hence, from (\ref{p6-095}) we obtain
$$
|\gamma_3|\le \frac{1}{243} (28+19 \sqrt{19})=0.4560.
$$

We now show that the inequality (\ref{p6-088}) is sharp. It is pertinent to note that  equality holds in (\ref{p6-088}) if we choose $c_1=c=\frac{1}{3} (10-2 \sqrt{19})$, $x=1$ and $t=1$ in (\ref{p6-090}). For such values of $c_1, x$ and $t$,  Lemma \ref{p6-lemma010} elicit  $c_2=\frac{2}{27} (97-20 \sqrt{19})$ and $c_3=\frac{1}{243} (2050-362 \sqrt{19})$. A function $P\in\mathcal{P}$ having the first three coefficients $c_1, c_2$ and $c_3$ as above is given by
\begin{align}\label{p6-140}
P(z) &=(1-2\lambda) \frac{1+z}{1-z}+\lambda \frac{1+uz}{1-uz}+\lambda \frac{1+\overline{u}z}{1-\overline{u}z}\\
& =1+\frac{1}{3} (10-2 \sqrt{19})z +\frac{2}{27} (97-20 \sqrt{19})z^2 +\frac{1}{243} (2050-362 \sqrt{19})z^3+\cdots,\nonumber
\end{align}
where $\lambda=\frac{1}{9} (-1-\sqrt{19})$ and $u=\alpha+i\sqrt{1-\alpha^2}$ with $\alpha=\frac{1}{18} (-13+4 \sqrt{19})$. Hence the inequality (\ref{p6-088}) is sharp for a function $f$ defined by $(1-z)^2f'(z)=P(z)$, where $P(z)$ is given by (\ref{p6-140}). This completes the proof.

\end{proof}

\begin{rem}
In \cite{Thomas-2016}, Thomas proved that $|\gamma_3|\le \frac{7}{12}=0.5833$ for functions in the class $\mathcal{K}_0$ with an additional condition that the second coefficient $b_2$ of the corresponding starlike function $g$ is real. However, in Theorem \ref{p6-theorem-001} we obtained a much improved bound $|\gamma_3|\le \frac{1}{18} (3+4 \sqrt{2})=0.4809$ for functions in the whole class $\mathcal{K}_0$ without assuming any additional condition on functions in the class $\mathcal{K}_0$. While for functions in the class $\mathcal{CR}^+$ (with $1\le a_2\le 2$) we obtained the sharp bound $|\gamma_3|\le \frac{1}{243} (28+19 \sqrt{19})=0.4560$. We conjecture that for the whole class $\mathcal{K}_0$ the sharp upper bound for $|\gamma_3|$ is $|\gamma_3|\le \frac{1}{243} (28+19 \sqrt{19})=0.4560$.

\end{rem}

\vspace{4mm}
\noindent\textbf{Acknowledgement:}
The first author thank University Grants Commission for the financial support through UGC-SRF Fellowship. The second author thank SERB (DST) for financial support.


\begin{thebibliography}{99}





\bibitem{Branges-1985}
{\sc L. de Branges}, A proof of the Bieberbach conjecture. {\it Acta Math.} {\bf 154} (1985), no. 1-2, 137--152.


\bibitem{Duren-book}
{\sc P. L. Duren}, {\it Univalent functions} (Grundlehren der
mathematischen Wissenschaften 259, New York, Berlin, Heidelberg, Tokyo), Springer-Verlag, 1983.


\bibitem{Duren-Leung-1979}
{\sc P. L. Duren} and {\sc Y. J. Leung}, Logarithmic coefficients of univalent functions, {\it J. Analyse Math.} {\bf 36} (1979), 36--43.



\bibitem{Elhosh-1996}
{\sc M. M. Elhosh}, On the logarithmic coefficients of close-to-convex functions, {\it J. Austral. Math. Soc. Ser. A} {\bf 60} (1996), 1--6.


\bibitem{Elin-Khavinson-Reich-Shoikhet-2010}
{\sc M. Elin, D. Khavinson, S. Reich} and {\sc D. Shoikhet},
Linearization models for parabolic dynamical systems via Abel's functional equation. (English summary)
{\it Ann. Acad. Sci. Fenn. Math.} {\bf 35} (2010), no. 2, 439--472.





\bibitem{Girela-2000}
{\sc D. Girela}, Logarithmic coefficients of univalent functions, {\it Ann. Acad. Sci. Fenn. Math.} {\bf 25} (2000), 337--350.






\bibitem{Hengartner-Schober-1970}
{\sc W. Hengartner} and {\sc G. Schober}, On Schlicht mappings to domains convex in one direction,
{\it Comment. Math. Helv.} {\bf 45} (1970), 303--314.


\bibitem{Koepf-1987}
{\sc W. Koepf}, On the Fekete-Szeg\"{o} problem for close-to-convex functions, {\it Proc. Amer. Math. Soc.} {\bf 101} (1987), 89--95.


\bibitem{Kowalczyk-Lecko-2014}
{\sc B. Kowalczyk} and {\sc A. Lecko}, The Fekete-Szeg\"{o} problem for close-to-convex functions with respect to the Koebe function, (English summary)
{\it Acta Math. Sci. Ser. B Engl. Ed.} {\bf 34} (2014), no. 5, 1571--1583.


\bibitem{Libera-Zlotkiewicz-1982}
{\sc R.J. Libera} and {\sc E.J. Z{\l}otkiewicz}, Early coefficients of the inverse of a regular convex function, {\it Proc. Amer. Math. Soc.}
{\bf 85} (2) (1982) 225--230.






\bibitem{Marjono-Thomas-2016}
{\sc Marjono} and {\sc D.K. Thomas}, The Second Hankel Determinant of Functions Convex in One Direction, {\it Int. J. Math. Anal.} {\bf 10} (2016), 423--428.





\bibitem{Thomas-2016}
{\sc D.K. Thomas}, On the logarithmic coefficients of close to convex functions, {\it Proc. Amer. Math. Soc.} {\bf 144} (2016), 1681--1687.







\end{thebibliography}
\end{document}